\def\N{\mathbb{N}}
\def\R{\mathbb{R}}
\def\C{\mathbb{C}}
\def\proj{\mathbb{P}}
\renewcommand{\a}{\alpha}
\renewcommand{\d}{\delta}
\newcommand{\g}{\gamma}
\newcommand{\Om}{\Omega}
\newcommand{\om}{\omega}
\newcommand{\z}{\zeta}
\def\mZ{{\mathcal Z}}
\renewcommand{\hat}{\widehat}
\renewcommand{\tilde}{\widetilde}
\def\Oh{{\cal O}}
\def\algorithm{\begin{center}
               \begin{minipage}{6in}
               \begin{tabbing}
               \marks}
\def\falgorithm{\end{tabbing}
                \end{minipage}
                \end{center}}
\def\marks{nn\= nn\= nn\= nn\= nn\= nn\= nn\= \kill}
\def\PSPACE{{\sf PSPACE}}
\def\PR{{\rm P}_{\kern-1pt\R}}
\def\PC{{\rm P}_{\kern-1pt\C}}
\def\NPR{{\rm NP}_{\kern-1pt\R}}
\def\NPC{{\rm NP}_{\kern-2pt\C}}
\def\DNPR{{\rm DNP}_{\kern-1pt\R}}
\def\DNPC{{\rm DNP}_{\kern-2pt\C}}
\def\PAR{{\rm PAR}_{\kern-1pt\R}}
\def\PHR{{\rm PH}_{\kern-1pt\R}}
\def\DPHR{{\rm DPH}_{\kern-1pt\R}}
\def\FPR{{\rm FP}_{\kern-1pt\R}}
\def\FPC{{\rm FP}_{\kern-1pt\C}}
\def\FPAR{{\rm FPAR}_{\kern-0.4pt\R}}
\def\FPARC{{\rm FPAR}_{\kern-0.4pt\C}}
\def\CPRi{{\rm \#P}_{\kern-2pt\R}}
\def\CPCi{{\rm \#P}_{\kern-2pt\C}}
\def\FEASR{{\mbox{\sc Feas}_{\kern-0.5pt\R}}}  
\def\FEASRbit{{\mbox{\sc Feas}^0_{\kern-1pt\R}}}
\def\SAS{{\mbox{\sc SAS}_{\kern-0.5pt\R}}}
\def\SASbit{{\mbox{\sc SAS}_{\kern-1pt\R}^0}}
\def\HNC{{\mbox{\sc HN}_{\kern-1pt\C}}}
\def\HNCbit{{\mbox{\sc HN}^0_{\kern-1pt\C}}}
\def\QASC{{\mbox{\sc QAS}_{\kern-1pt\C}}}
\def\DIMR{{\mbox{\sc Dim}_{\kern-0.5pt\R}}}
\def\DIMC{{\mbox{\sc Dim}_{\kern-0.5pt\C}}}
\def\DIMadd{{\mbox{\sc Dim}_{\kern-0.5pt\add}}}
\def\DIMRbit{{\mbox{\sc Dim}^0_{\kern-0.5pt\R}}}
\def\DIMCbit{{\mbox{\sc Dim}^0_{\kern-0.5pt\C}}}
\def\REACH{{\mbox{\sc Reach}_{\kern-0.5pt\R}}}
\def\REACHbit{{\mbox{\sc Reach}^0_{\kern-0.5pt\R}}}
\def\CREACHbit{{\mbox{\sc CReach}^0_{\kern-0.5pt\R}}}
\def\REACH{{\mbox{\sc Reach}_{\kern-0.5pt\R}}}
\def\REACHbit{{\mbox{\sc Reach}^0_{\kern-0.5pt\R}}}
\def\CREACHbit{{\mbox{\sc CReach}^0_{\kern-0.5pt\R}}}
\newcommand{\HHNC}{\mbox{\sc HN}_{\kern-1pt\C}}
\newcommand{\ud}{\mathrm{d}}
\def\id{\mathrm{id}}
\def\FPk{{\rm FP}_{\kern-1pt k}}
\def\Pk{{\rm P}_{\kern-1pt k}}
\def\NPk{{\rm NP}_{\kern-2pt k}}
\newcommand\CPpar[1]{{\rm \#P}_{\kern-2pt #1}}
\def\HNk{\textsc{HN}_{\kern-1pt k}}
\def\DIMk{{\mbox{\sc Dim}_{\kern-0.5pt k}}}
\newcommand{\comment}[1]{}
\def\Hdr{H_{\rm{dR}}}
\def\Res{{\rm Res}}
\newtheorem{lemma}{Lemma}
\newtheorem{theorem}[lemma]{Theorem}
\newtheorem{corollary}[lemma]{Corollary}
\newtheorem{example}[lemma]{Example}
\begin{document}
\title{Effective de Rham Cohomology - The Hypersurface Case}
\numberofauthors{1}
\author{
\alignauthor
Peter Scheiblechner\\
       \affaddr{Hausdorff Center for Mathematics}\\
       \affaddr{Endenicher Allee 62}\\
       \affaddr{53115 Bonn, Germany}\\
       \email{peter.scheiblechner@hcm.uni-bonn.de}
}

\maketitle

\begin{abstract}
We prove an effective bound for the degrees of generators of the algebraic de
Rham cohomology of smooth affine hypersurfaces. In particular, we show that
the de Rham cohomology $\Hdr^p(X)$ of a smooth hypersurface $X$ of degree~$d$
in~$\C^n$ can be generated by differential forms of degree $d^{\Oh(pn)}$.
This result is relevant for the algorithmic computation of the cohomology, but
is also motivated by questions in the theory of ordinary differential equations related
to the infinitesimal Hilbert 16th problem.
\end{abstract}

\category{F.2.2}{Analysis of Algorithms and Problem Complexity}
{Nonnumerical Algorithms and Problems}[Geometrical Problems and Computations]

\terms{Theory}

\begin{keywords}
de Rham Cohomology, effective bounds, differential forms
\end{keywords}

\section{Introduction}
Let $X$ be a smooth variety in $\C^n$.
A fundamental result of Grothendieck says that the cohomology of $X$ can be
described in terms of \textit{algebraic} differential forms on~$X$~\cite{gro:66}.
More precisely, he proved that the singular cohomology of $X$ is isomorphic to
the algebraic de Rham cohomology $\Hdr^\bullet(X)$, which is defined as the
cohomology of the complex of algebraic differential forms on $X$.
Hence, each cohomology class in $\Hdr^p(X)$ can be represented by a $p$-form
\begin{equation}\label{eq:diffForm}
\omega=\sum_{i_1<\cdots<i_p}\omega_{i_1\cdots i_p}\ud X_{i_1}\wedge\cdots\wedge \ud X_{i_p},
\end{equation}
where the $\omega_{i_1\cdots i_p}$ are polynomial functions on $X$. However,
Grothendieck's proof gives no information on the degrees of the polynomials
$\omega_{i_1\cdots i_p}$.
In this paper we prove a bound on their degrees in the case of a hypersurface.
In a forthcoming paper, we will consider the general case.

\subsection{Motivation}
It is a long standing open question in algorithmic real algebraic geometry to
find a single exponential time algorithm for computing the Betti numbers of a
semialgebraic set. Single exponential time algorithms are known, e.g., for
counting the connected components and computing the Euler characteristic of a
semialgebraic set (for an overview see~\cite{basu:08}, for details and
exhaustive bibliography see~\cite{bpr:03}). The best result in this direction
states that for fixed $\ell$ one can compute the first $\ell$ Betti numbers of
a semialgebraic set in single exponential time~\cite{basu:06}.

Over the complex numbers, one approach for computing Betti numbers is to 
compute the algebraic de Rham cohomology. In~\cite{ota:98,wal:00} the de Rham
cohomology of the complement of a complex affine variety is computed using
Gr\"{o}bner bases for $\mathcal{D}$-modules. This algorithm is extended in
\cite{wal:00b} to compute the cohomology of a projective variety.
However, the complexity of these algorithms is not analyzed, and due to their
use of Gr\"{o}bner bases a good worst-case complexity is not to be expected.
In~\cite{bus:09} a single exponential time (in fact, parallel polynomial
time) algorithm is given for counting the connected components, i.e., computing
the zeroth de Rham cohomology, of a (possibly singular) complex variety.
This algorithm is extended in~\cite{sch:10} to one with the same complexity
for computing equations for the components.
The first single exponential time algorithm for computing \textit{all} Betti
numbers of an interesting class of varieties is given in~\cite{sch:09}. Namely,
this paper shows how to compute the de Rham cohomology of a smooth projective
variety in parallel polynomial time. In terms of structural complexity, these
results are the best one can hope for, since the problem of computing a fixed
Betti number (e.g., deciding connectedness) of a complex affine or projective
variety defined over the integers is $\PSPACE$-hard~\cite{sch:07}.

Besides being relevant for algorithms, our question also has connections to the
theory of ordinary differential equations. 
The long standing infinitesimal Hilbert 16th problem has been solved
in~\cite{bny:10}.
The authors derive a bound on the number of limit cycles generated from
nonsingular energy level ovals (isolated periodic trajectories) in a
non-conservative perturbation of a Hamiltonian polynomial vector field in the
plane.
It seems that their proof can be considerably generalized to solutions of
certain linear systems of Pfaffian differential equations. Examples of such
systems are provided by period matrices of polynomial maps, once the
corresponding Gauss-Manin connexion can be explicitly constructed. For this
construction one needs degree bounds for generators of the cohomology of the
generic fibers of the polynomial map.

\subsection{Known Cases}\label{ss:knownCases}
It follows from the results of~\cite{sch:09} that if $X$ has no singularities
at infinity, i.e., the projective closure of $X$ in $\proj^n$ is smooth, then
each class in $\Hdr^\bullet(X)$ can be represented by a differential form of
degree at most $m(em+1)d$, where $m=\dim X$, $d=\deg X$, and $e$ is the maximal
codimension of the irreducible components of $X$. However, in general $X$ does
have singularities at infinity, and resolution of singularities has a very bad
worst-case complexity~\cite{bgmw:11}.

Another special case with known degree bounds is the complement of a projective
hypersurface, which we will actually use in this paper (see the proof of
Theorem~\ref{thm:effDeRham}). The statement follows
essentially from~\cite{ded:90} and~\cite{dimc:90}, the argument can be found
in~\cite[Corollary 6.1.32]{dimc:92}. Let $f\in\C[X_0,\ldots,X_n]$ be a
homogeneous polynomial, and consider $U:=\proj^n\setminus\mZ(f)$, which is an
affine variety. Then, each class in $\Hdr^p(U)$ is represented by a
(homogeneous) differential form
$$
\frac{\a}{f^p}\quad\text{with}\quad\deg \a=p\deg f
$$
(see Section~\ref{sec:basics} for the definition of the degree of a differential
form).

\subsection{Main Result}
In this paper we prove that each class in the de Rham cohomology~$\Hdr^p(X)$ of
a smooth hypersurface $X\subseteq\C^n$ of degree~$d$ can be represented by a
differential $p$-form of degree at most
$$
(p+1)(d+1)(2d^n+d)^{p+1}\le d^{\Oh(pn)}
$$
(see Theorem~\ref{thm:effDeRham}).

\section{Preliminaries}\label{sec:basics}

An (affine) \textit{variety} in $\C^n$ is the common zero-set
$$
X=\mZ(f_1,\ldots,f_r)=\{x\in\C^n\,|\,f_1(x)=\cdots=f_r(x)=0\}
$$
of a set of polynomials $f_1,\ldots,f_r\in\C[X_1,\ldots,X_n]$. 
The \textit{coordinate ring} of a variety $X$ is $\C[X]=\C[X_1,\ldots,X_n]/I(X)$,
where $I(X):=\{f\in \C[X_1,\ldots,X_n]\,|\,f(x)=0\ \forall x\in X\}$ is the
\textit{vanishing ideal} of $X$. Such a $\C$-algebra is called a
\textit{(reduced) affine algebra}. By Hilbert's Nullstellensatz, $I(X)$ is the
radical of the ideal generated by $f_1,\ldots,f_r$. In particular, if $X$ is a
hypersurface defined by one polynomial~$f$, then $I(X)=(g)$, where $g$ is the
squarefree part of $f$, which is also called a \textit{reduced} equation of
$X$. A variety $X$ (and then also its coordinate ring) is called \textit{smooth},
if at any point $x\in X$ we have
$\dim_x X=\dim T_xX$, i.e., at $x$ the local dimension and the dimension of the
tangent space of~$X$ coincide. In particular, a hypersurface is smooth if and
only if $\mZ(f,\partial_1f,\ldots,\partial_1f)=\emptyset$, where $f$ is a
reduced equation of~$X$.

We will also use complete rings, which we get from affine rings by the process
of completion~\cite[Chapter 7]{eise:95}. Let $A$ be an affine algebra and $I$
an ideal in $A$. The \textit{completion} $\hat{A}=\hat{A}_I$ of~$A$ with
respect to $I$ is defined as the inverse limit of the factor rings
$A/I^\nu$, $\nu\ge 0$. There is a canonical map $A\to\hat{A}$, whose kernel is
$\bigcap_\nu I^\nu$, so it is injective in our case.
Alternatively, if $I=(f_1,\ldots,f_r)$, one can define $\hat{A}$ as
$A[[T_1,\ldots,T_r]]/(T_1-f_1,\ldots,T_r-f_r)$, so its elements
are power series in $f_1,\ldots,f_r$~\cite[Exercise 7.11]{eise:95}.
For instance, if $A=B[T]$ and $I=(T)$, then $\hat{A}=B[[T]]$ is the ring of
formal power series in $T$ with coefficients in $B$.

Let $A$ be a $\C$-algebra (commutative, with 1).
The module of \textit{K\"{a}hler differentials} $\Om_A:=\Om_{A/\C}$ is defined
as the the $A$-module generated by symbols $\ud f$ for all $f\in A$, modulo
the relations of Leibniz' rule and $\C$-linearity for the
\textit{universal derivation} $\ud\colon A\to\Om_A$. For instance, if $A=\C[X_1,\ldots,X_n]$,
then $\Om_A$ is the free module with basis $\ud X_1,\ldots,\ud X_n$ and
$\ud f=\sum_i\partial_i f\ud X_i$.

Now let $\Om_A^p:=\bigwedge^p\Om_A$ be the $p$-th exterior power of the
$A$-module $\Om_A$.
We define the differential $\ud\colon \Om^p_A\to\Om^{p+1}_A$ by setting
$\ud (f \ud g_1\wedge\cdots\wedge\ud g_p):=\ud f\wedge\ud g_1\wedge\cdots\wedge\ud g_p$ for
$f,g_i\in A$.
It is easy to check that~$\ud$ satisfies the graded Leibniz' rule and
$\ud\circ\ud=0$. This  way we obtain the \textit{(algebraic) de Rham complex}
$$
\Om^\bullet_A\colon A=\Om^0_A\stackrel{\ud}{\longrightarrow}\Om_A^1\stackrel{\ud}
{\longrightarrow}\cdots\stackrel{\ud}{\longrightarrow}\Om_A^n\longrightarrow 0.
$$
Its cohomology $\Hdr^\bullet(A):=H^\bullet(\Om^\bullet_A)$ is called the
\textit{de Rham cohomology} of $A$. If $A=\C[X]$ is the coordinate ring of a
smooth variety $X\subseteq\C^n$, then $\Hdr^\bullet(X):=\Hdr^\bullet(A)$ is called
the \textit{(algebraic) de Rham cohomology} of $X$. Fundamental for us is the
result of~\cite{gro:66} stating that the de Rham cohomology $\Hdr^\bullet(X)$ of
a smooth variety $X$ is naturally isomorphic to the singular cohomology of $X$.

The module of K{\"a}hler differentials of a complete ring may not be finitely
generated (see, e.g.,~\cite[Exercise 16.14]{eise:95}).
In these cases we use the \textit{universally finite} module of differentials,
which is always finitely generated (see~\cite[\S11--12]{kun:86}).
Let $R$ be an affine algebra, $I$ an ideal in $R$, and $\hat{R}$ the
completion of $R$ with respect to $I$. The \textit{completion} of~$\Om_{R}$ with
respect to $I$ is $\hat{\Om}_{\hat{R}}=\hat{R}\otimes_R\Om_R$ and is called the
\textit{universally finite module of differentials of}~$\hat{R}$. There is a
\textit{universally finite derivation} $\ud\colon\hat{R}\to \hat{\Om}_{\hat{R}}$
which is continuous, i.e., it commutes with infinite sums. For instance, for an
affine algebra $B$ we have
$$
\hat{\Om}_{B[[T]]}=B[[T]]\otimes\Om_{B[T]}=B[[T]]\ud T\oplus\Om_{B}
$$
and $\ud f=\partial_T f\ud T+\ud_B f$ for $f\in B[[T]]$,
where $\partial_T$ denotes partial derivative with respect to $T$ and
$\ud_B$ denotes the derivation of $\Om_B$ applied to the coefficients of $f$~\cite[Example 12.7]{kun:86}. 

Now let $A=\C[X_1,\ldots,X_n]/I$ be an affine algebra. We are interested in
degree bounds for the de
Rham cohomology of $A$, so we introduce the following notation. For
$f\in\C[X_1,\ldots,X_n]$ we denote by~$\overline{f}$ its residue class in $A$.
We set
$$
\deg\overline{f}:=\min\{\deg g\,|\,g\in\C[X_1,\ldots,X_n],\ \overline{g}=\overline{f}\}.
$$
Furthermore, we define the degree of differential forms in~$\Om_A^p$ by setting
$$
\deg(\overline{f}\ud \overline{X}_{i_1}\wedge\cdots\wedge\overline{X}_{i_p}):=
\deg\overline{f}+p.
$$
We denote by $\deg(\Hdr^p(A))$ the infimum over all integers $\d$ such that each
cohomology class in $\Hdr^p(A)$ has a representative of degree $\le\d$.
For a localization $A_f$, we define
$$
\deg\frac{a}{f^s}:=\deg\a-s\deg f,\quad \a\in\Om_A^p.
$$

\section{Effective Gysin Sequence}
The main tool in our proof is the Gysin sequence which is the following
\begin{lemma}\label{lem:gysinSequ}
Let $Y$ be an irreducible smooth variety and $X\subseteq Y$ a smooth
hypersurface. Then there is an exact sequence
$$
\cdots\rightarrow \Hdr^p(Y)\to \Hdr^p(Y\setminus X)\stackrel{\Res}{\to}
\Hdr^{p-1}(X)\to \Hdr^{p+1}(Y)\to\cdots
$$
\end{lemma}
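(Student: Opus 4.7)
The plan is to derive the Gysin sequence as the long exact cohomology sequence of a short exact sequence of complexes built from the \emph{logarithmic de Rham complex} of $Y$ along $X$. Locally on $Y$, the smooth hypersurface $X$ is cut out by a regular function $f$, and I define $\Om_Y^\bullet(\log X)$ as the subcomplex of $j_*\Om_{Y\setminus X}^\bullet$ (where $j\colon Y\setminus X\hookrightarrow Y$ is the open inclusion) consisting of those forms $\om$ for which both $f\om$ and $f\,\ud\om$ extend to regular forms on $Y$. These local prescriptions patch to a global subcomplex $\Om_Y^\bullet\subseteq\Om_Y^\bullet(\log X)\subseteq j_*\Om_{Y\setminus X}^\bullet$, since the condition is independent of the local choice of $f$ up to a unit.

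I next construct the residue map. Any $\om\in\Om_Y^p(\log X)$ admits a local decomposition $\om=\frac{\ud f}{f}\wedge\a+\b$ with $\a\in\Om_Y^{p-1}$ and $\b\in\Om_Y^p$, and setting $\Res(\om):=\a|_X$ yields a well-defined $\C$-linear map $\Res\colon\Om_Y^p(\log X)\to\Om_X^{p-1}$ whose kernel is exactly $\Om_Y^p$ and which commutes with the differentials up to an overall sign. This produces a short exact sequence of complexes
$$
0\longrightarrow\Om_Y^\bullet\longrightarrow\Om_Y^\bullet(\log X)\stackrel{\Res}{\longrightarrow}\Om_X^{\bullet-1}\longrightarrow 0,
$$
where the right-hand term denotes $\Om_X^\bullet$ shifted up by one in cohomological degree.

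The main technical step, and the place I expect the real work to lie, is to show that the inclusion $\Om_Y^\bullet(\log X)\hookrightarrow j_*\Om_{Y\setminus X}^\bullet$ is a quasi-isomorphism, so that $H^\bullet(\Om_Y^\bullet(\log X))\cong\Hdr^\bullet(Y\setminus X)$. This is a pole-order reduction: smoothness of $X$ ensures that near any point of $X$, $\ud f$ is nonzero modulo $f$, so that locally any $\om=\eta/f^k$ with $k\ge 2$ can be rewritten as $\frac{\ud f}{f^k}\wedge\a+\frac{\b}{f^{k-1}}$. The identity $\ud(\a/f^{k-1})=\ud\a/f^{k-1}-(k-1)\,\ud f\wedge\a/f^k$ then lets one trade the leading pole for an exact form plus a term of strictly smaller pole order; iterating produces a representative with $k\le 1$, i.e., a form with at most a logarithmic pole along $X$.

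With the quasi-isomorphism established, the long exact sequence in cohomology attached to the short exact sequence above reads
$$
\cdots\to\Hdr^p(Y)\to\Hdr^p(Y\setminus X)\stackrel{\Res}{\to}\Hdr^{p-1}(X)\to\Hdr^{p+1}(Y)\to\cdots,
$$
where the apparent jump of two in cohomological degree after the residue reflects the degree shift in the quotient complex combined with the connecting homomorphism. This is precisely the Gysin sequence as stated.
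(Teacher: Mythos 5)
Your argument is correct in outline, but it takes a genuinely different route from the paper's. The paper does not prove the lemma via the logarithmic complex: it cites Monsky's formal cohomology, and in the effective version (Theorem~\ref{thm:effGysin}) it starts from the short exact sequence $0\to\Om^\bullet_A\to\Om^\bullet_{A_f}\to\Om^\bullet_{A_f}/\Om^\bullet_A\to 0$ and identifies the cohomology of the \emph{quotient} complex with $\Hdr^{\bullet-1}(X)$ by means of the formal tubular neighbourhood $B[[T]]\simeq\hat{A}$ (Corollary~\ref{cor:psi}, Lemma~\ref{lem:iso}), reading off the coefficient of $\ud T/T$. You instead work with the \emph{subcomplex} $\Om_Y^\bullet(\log X)\subseteq j_*\Om^\bullet_{Y\setminus X}$, put $\Om_X^{\bullet-1}$ in the quotient position, and carry the real burden in the quasi-isomorphism $\Om_Y^\bullet(\log X)\hookrightarrow j_*\Om^\bullet_{Y\setminus X}$, proved by Griffiths-type pole-order reduction (which indeed uses smoothness of $X$ so that $\ud f$ is part of a local coframe, and $k-1\neq 0$ in the reduction step). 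Both routes are standard and yours is sound; it is essentially the Deligne--Dimca argument that the paper quotes in \S\ref{ss:knownCases} for complements of projective hypersurfaces, and it has the advantage of producing the residue directly on a logarithmic representative and of working for general smooth $Y$. What the paper's completion route buys is effectivity with the tools developed here: the isomorphism $B[[T]]\simeq\hat{A}$ is constructed explicitly with degree control via the effective Nullstellensatz, whereas making your pole-order reduction quantitative would be a separate Griffiths--Dwork-type computation. Two points you should make explicit to close the sketch: (i) well-definedness of $\Res$, i.e.\ that $\a|_X$ is independent of the decomposition $\om=\frac{\ud f}{f}\wedge\a+\b$ (the ambiguity in $\a$ lies in $f\Om_Y^{p-1}+\ud f\wedge\Om_Y^{p-2}$, which maps to zero in $\Om_X^{p-1}$); and (ii) since $\Hdr^\bullet$ is defined here via global forms, you need $Y$ affine (or a passage to hypercohomology) to turn your exact sequence of complexes of sheaves into an exact sequence of complexes of global sections --- harmless for the paper's application $Y=\C^n$, but worth saying.
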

A proof of this Lemma is sketched in~\cite{mon:68}. Along the same lines we
will prove an effective version of it for the case $Y=\C^n$. This means
that we will describe the map Res explicitly on the level of differential forms,
so that we can control its effect on their degree.
Let us first record an easy consequence of the Gysin sequence.
Since $\Hdr^p(\C^n)=0$ for $p>0$, Lemma~\ref{lem:gysinSequ} implies
\begin{corollary}
For a smooth hypersurface $X\subseteq\C^n$ the residue map
$$
\Res\colon\Hdr^p(\C^n\setminus X)\stackrel{\simeq}{\longrightarrow} \Hdr^{p-1}(X)
$$
is an isomorphism for all $p>0$.
\end{corollary}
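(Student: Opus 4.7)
The plan is completely routine: I would simply instantiate Lemma~\ref{lem:gysinSequ} with $Y = \C^n$, which is irreducible and smooth, and chase the resulting long exact sequence at degree $p$. Writing out the relevant five-term fragment gives
$$
\Hdr^p(\C^n) \longrightarrow \Hdr^p(\C^n\setminus X) \stackrel{\Res}{\longrightarrow} \Hdr^{p-1}(X) \longrightarrow \Hdr^{p+1}(\C^n).
$$
The key input is the vanishing $\Hdr^q(\C^n) = 0$ for all $q > 0$, which is the algebraic analogue of the Poincaré lemma for affine space (and also follows from Grothendieck's comparison theorem together with the contractibility of $\C^n$). For $p > 0$ we have $p \geq 1$ and $p+1 \geq 2$, so both flanking terms in the displayed fragment vanish. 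Exactness then forces $\Res$ to be simultaneously injective (kernel equals image of the zero map) and surjective (image equals kernel of the map to $0$), so it is an isomorphism.

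There is no real obstacle here; the statement is a pure diagram chase once one has Lemma~\ref{lem:gysinSequ} in hand, and the only auxiliary fact to cite is the vanishing of the higher de Rham cohomology of $\C^n$. I would therefore present the proof as a single short paragraph observing this.
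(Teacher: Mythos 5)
Your argument is exactly the paper's: the corollary is stated there as an immediate consequence of Lemma~\ref{lem:gysinSequ} applied with $Y=\C^n$ together with the vanishing $\Hdr^q(\C^n)=0$ for $q>0$, and your diagram chase spells this out correctly. No issues.
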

Let $A:=\C[X_1,\ldots,X_n]$ and $X:=\mZ(f)$ smooth, where $f\in A$ is squarefree.
Then the relevant coordinate rings are
$$
\C[\C^n\setminus X]=A_f \quad\text{and}\quad B:=\C[X]=A/(f).
$$
Furthermore, $d:=\deg X=\deg f$. We assume $d\ge 3$.
\begin{theorem}\label{thm:effGysin}
Let $d\ge 3, p>0$. The residue map
$$
\Res\colon\Hdr^p(A_f)\to\Hdr^{p-1}(B)
$$
is induced by a map $\Om_{A_f}^p\to\Om_B^{p-1}$ which takes a $p$-form
$\om=\frac{\a}{f^s}$ to a $(p-1)$-form of degree at most
$$
(2d^n+d)^{s}(\deg \om+sd).
$$
\end{theorem}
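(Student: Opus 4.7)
The plan is to construct the residue map completely explicitly at the level of K\"ahler differentials, and then track polynomial degrees through the construction. The only non-trivial input is an effective Nullstellensatz certificate for smoothness: since $X=\mZ(f)$ is smooth, the polynomials $f,\partial_1 f,\ldots,\partial_n f$ have no common zero in $\C^n$, so by Koll\'ar's effective Nullstellensatz (applied to $n+1$ polynomials of degrees at most $d\ge 3$) one obtains $g_0,g_1,\ldots,g_n\in A$ with
$$
g_0 f + \sum_{i=1}^n g_i\partial_i f = 1, \qquad \deg g_i\le d^n.
$$
This is the sole source of the factor $d^n$ in the final bound. Package the coefficients into the algebraic vector field $v:=\sum_i g_i\,\partial/\partial X_i$, whose polynomial coefficients have degree at most $d^n$.

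The engine of the proof is the Cartan-type anti-derivation identity
$$
df\wedge\iota_v\alpha + \iota_v(df\wedge\alpha) \;=\; df(v)\,\alpha \;=\; (1-g_0 f)\,\alpha,
$$
applied to a closed form $\omega=\alpha/f^s$ with $s\ge 2$. Closedness forces $df\wedge\alpha=(f/s)\,d\alpha$, so the middle term $\iota_v(df\wedge\alpha)/f^s$ already has pole order $s-1$; the term $df\wedge\iota_v\alpha/f^s$ is removed modulo exact forms via
$$
d\bigl(\iota_v\alpha/f^{s-1}\bigr) \;=\; d\iota_v\alpha/f^{s-1} \;-\; (s-1)\,df\wedge\iota_v\alpha/f^s.
$$
Combining these moves yields the basic pole-reduction step
$$
\frac{\alpha}{f^s} \;\equiv\; \frac{1}{s-1}\frac{d\iota_v\alpha}{f^{s-1}} \;+\; \frac{1}{s}\frac{\iota_v d\alpha}{f^{s-1}} \;+\; \frac{g_0\alpha}{f^{s-1}} \pmod{d\Om^{p-1}_{A_f}},
$$
in which the new numerator has degree at most $\deg\alpha+d^n$. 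Iterating $s-1$ times reaches pole order one, and a final application of the same identity to the resulting form $\beta/f$ extracts the Poincar\'e residue as the class of $\iota_v\beta$ in $\Om_B^{p-1}$. The claimed bound then follows by bounding each step multiplicatively by the crude factor $2d^n+d$, which loosely absorbs the additive growth from $\iota_v$, from $d$, and from the contribution of $g_0 f\alpha$; iterated $s$ times, starting from $\deg\alpha=\deg\omega+sd$, this gives the stated $(2d^n+d)^s(\deg\omega+sd)$.

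The main obstacle is to certify that this very explicit recipe actually induces the abstract Gysin residue of Lemma~\ref{lem:gysinSequ}. I would handle this by passing to the $(f)$-adic completion: smoothness of $X$ yields an isomorphism $\hat A_{(f)}\cong B[[T]]$ with $T$ a formal lift of $f$, under which $df(v)^{-1}\iota_v$ becomes contraction along $\partial/\partial T$ and the pole-reduction plus Poincar\'e-residue recipe becomes the classical coefficient-extraction residue on $B[[T]]$, i.e.\ the standard analytic residue at $T=0$. This is the form of the Gysin residue sketched in~\cite{mon:68}, so the identification is formal once the completion picture is set up. Secondary technical chores---sign tracking, the denominators $1/(s-1)$ and $1/s$, and confirming the Koll\'ar bound $d^n$ under the hypothesis $d\ge 3$---are routine.
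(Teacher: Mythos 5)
Your proposal is essentially correct, but it follows a genuinely different route from the paper's. Both proofs draw their quantitative input from the same place --- an effective Nullstellensatz certificate $\sum_i g_i\partial_i f\equiv 1\pmod{(f)}$ with $\deg g_i\le d^n$, which exists precisely because $X$ is smooth --- but they use it differently. The paper feeds the certificate into an iterated formal-smoothness lifting (Lemmas~\ref{lem:groth}--\ref{lem:iso}) to build an explicit ring isomorphism $B[[T]]\simeq\hat{A}$ with degree bounds on the power-series coefficients, transports $\om$ to $B[[T]][T^{-1}]$, and reads off the residue as the coefficient of $\ud T/T$; the multiplicative growth $(2d^n+d)^\nu$ arises from re-expanding the coordinates $X_i$ in the new variables. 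You instead package the certificate into a vector field $v$ and perform Griffiths--Dwork-style pole-order reduction with the contraction $\iota_v$ modulo exact forms; your identities check out (in characteristic zero, with $s\ge 2$ for the reduction step, and the intermediate representatives stay closed since they differ from $\om$ by exact forms). Your route has two advantages: you do not need the quantitative completion machinery at all --- once you reach $\beta/f\equiv\frac{\ud f}{f}\wedge\iota_v\beta\pmod{\Om^\bullet_A+\ud\Om^{\bullet-1}_{A_f}}$, the identification with the Gysin residue is immediate from its characterization as the cohomology inverse of $\lambda$, so even the detour through $B[[T]]$ you propose is dispensable; and your degree growth is only \emph{additive}, roughly $d^n$ per reduction step, so a careful count gives a representative of degree about $\deg\a+s\,d^n$, strictly sharper than the stated $(2d^n+d)^s(\deg\om+sd)$ (which your additive count does verify, since $\deg\a\ge p\ge 1$). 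The price is that your map is only defined on \emph{closed} forms, whereas the paper's coefficient-extraction map is linear on all of $\Om^p_{A_f}$; this is harmless for Theorem~\ref{thm:effDeRham}, where only cohomology classes matter, but you should state the restriction explicitly.
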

\begin{example}\label{ex:univar}
It is instructive to consider the case $n=1$. Consider $f=\prod_i^d(X-\z_i)\in\C[X]$,
and let $\om=\frac{g}{f^s}\ud X\in\Om_{\C[X]_f}$. 
By expanding the rational function $\frac{g}{f^s}$ into partial fractions and
noting that polynomials and the terms of the form $c(X-\z_i)^j$ with $j<-1$ can
be integrated, one sees that the cohomology $\Hdr^1(\C\setminus\mZ(f))$ is
generated by the differential forms
$$
\frac{\ud X}{X-\z_i},\quad 1\le i\le d.
$$
Since they are also linearly independent, they form a basis of this cohomology.
We will see later that the residue map sends $\frac{\ud X}{X-\z_i}$ to
$e_i:=\prod_{j\ne i}(X-\z_j)/a_i$, where $a_i:=\prod_{j\ne i}(\z_i-\z_j)$. 
Since $\mZ(f)=\{\z_1,\ldots,\z_d\}$ and $e_i(\z_j)=\d_{ij}$,
the $e_i$ are the idempotents corresponding to the points $\z_i$ and thus are a
basis of the cohomology $\Hdr^0(\mZ(f))$.
\end{example}
For the proof of Theorem~\ref{thm:effGysin} we will need the completion
$\hat{A}=\hat{A}_{(f)}$ of the algebra~$A$ with respect to the principal ideal
$(f)$. Recall from \S\ref{sec:basics} that $\hat{A}=A[[T]]/(T-f)$, so its
elements are power series in $f$.
Note, however, that these power series are not unique. E.g., $f\in A$ can be
represented as $f+0\cdot f+\cdots$ or $0+1\cdot f+0\cdot f^2+\cdots$. The
crucial result for us is a lemma of Grothendieck stating that there is a ring
isomorphism $B[[T]]\to\hat{A}$ (cf.~\cite[Lemma II,1.2]{har:75}), which
establishes a unique
power series representation for the completion. We need to construct this
isomorphism explicitly in order to bound degrees. It is easy to come up with a
vector space isomorphism, so the difficulty lies in finding a ring isomorphism.
The technical construction is in the following statement, which expresses
that~$B$ is a \textit{formally smooth}
$\C$-algebra~\cite[Definition 19.3.1]{gro:64}. 

For a tuple $x=(x_1,\ldots,x_n)$ over an affine algebra $R$ we write
$\deg(x):=\max_j\deg(x_j)$. If $\psi\colon R\to S$ is a homomorphism, we write
$\psi(x):=(\psi(x_1),\ldots,\psi(x_n))$.
For $g\in R$ we denote by $\overline{g}$ its image in any factor algebra of $R$.

\begin{lemma}\label{lem:groth}
Let $A=\C[X_1,\ldots,X_n]$ and $f\in A$ be squarefree such that $B=A/(f)$ is
smooth.
Furthermore, let $\nu\in\N$ and $\psi\colon B\to A/(f^\nu)$ be a ring
homomorphism that lifts
the identity $B\to B$, i.e., the composition $B\to A/(f^\nu)\twoheadrightarrow B$
is the identity. Then $\psi$ can be lifted to a ring homomorphism
$\tilde{\psi}\colon B\to A/(f^{\nu+1})$, i.e., the diagram
$$
\xymatrix{
 & A/(f^{\nu+1}) \ar@{->>}[d]^\pi \\
B \ar[r]^\psi\ar@{-->}[ru]^{\tilde{\psi}} &  A/(f^\nu) \\
}
$$
commutes. Furthermore, we have
$$
\deg(\tilde{\psi}(\overline{X}))\le d\cdot \deg(\psi(\overline{X}))+d^n.
$$
\end{lemma}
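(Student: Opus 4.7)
The plan is to use the classical deformation-theoretic argument for lifting modulo $f^{\nu+1}$, while tracking polynomial degrees throughout. I pick polynomial representatives $\tilde x_i \in A$ of $\psi(\overline{X_i}) \in A/(f^\nu)$ of minimal degree, so that $\deg \tilde x_i \le e := \deg\psi(\overline{X})$. I then seek a correction of the form $\tilde\psi(\overline{X_i}) := \tilde x_i + f^\nu \epsilon_i \bmod f^{\nu+1}$ with $\epsilon_i \in A$ to be chosen. Any such assignment extends to a ring homomorphism $A \to A/(f^{\nu+1})$ sending $X_i$ to $\tilde x_i + f^\nu \epsilon_i$, and this descends to a homomorphism $B \to A/(f^{\nu+1})$ precisely when $f(\tilde x + f^\nu \epsilon) \equiv 0 \pmod{f^{\nu+1}}$; compatibility with $\psi$ modulo $f^\nu$ is automatic since $f^\nu \epsilon_i \in (f^\nu)$.

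Expanding by Taylor's formula and using $\nu \ge 1$ (so all second-order corrections lie in $(f^{2\nu}) \subseteq (f^{\nu+1})$), the obstruction becomes
\[
f(\tilde x) + f^\nu \sum_i (\partial_i f)(\tilde x)\,\epsilon_i \equiv 0 \pmod{f^{\nu+1}}.
\]
Since $\psi$ is a ring homomorphism, $f(\tilde x) \in (f^\nu)$, and unique factorization in the UFD $A$ furnishes a unique $r \in A$ with $f(\tilde x) = f^\nu r$ and $\deg r \le d(e-\nu)$. Because $\psi$ lifts the identity, $\overline{\tilde x_i} = \overline{X_i}$ in $B$, so $(\partial_i f)(\tilde x) \equiv \partial_i f \pmod f$, and the obstruction reduces to solving
\[
\overline r + \sum_i \overline{\partial_i f}\cdot \overline{\epsilon_i} = 0 \quad\text{in } B.
\]

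To solve this with controlled degrees I invoke smoothness: $\mZ(f, \partial_1 f, \ldots, \partial_n f) = \emptyset$, so Hilbert's Nullstellensatz yields a B\'ezout identity $1 = b_0 f + \sum_i b_i \partial_i f$ in $A$. Koll\'ar's effective Nullstellensatz (applicable since $d \ge 3$) lets us choose the $b_i$ with $\deg(b_i \partial_i f) \le d^n$, whence $\deg b_i \le d^n - d + 1$. Setting $\epsilon_i := -r b_i$ solves the obstruction, and combining bounds gives $\deg(f^\nu \epsilon_i) \le \nu d + d(e-\nu) + d^n - d + 1 \le de + d^n$, yielding the claimed bound $\deg \tilde\psi(\overline{X_i}) \le de + d^n$. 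The main obstacle is the sharp degree bound on $r$---which uses essentially that $A$ is a UFD with $f$ nonconstant, so that the quotient $r = f(\tilde x)/f^\nu$ is a polynomial of predictable degree---together with the precise $d^n$ bound from the effective Nullstellensatz, which is presumably the reason for the standing hypothesis $d \ge 3$.
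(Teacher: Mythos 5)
Your proof is correct and follows essentially the same path as the paper's: pick polynomial representatives of $\psi(\overline{X}_i)$, perturb by $f^\nu\epsilon_i$, expand via Taylor's formula modulo $f^{\nu+1}$, extract the obstruction $r$ from $f(\tilde x)=r f^\nu$, and kill it using an effective Nullstellensatz certificate for $(f,\partial_1 f,\ldots,\partial_n f)=(1)$ arising from smoothness. The only cosmetic difference is that you track the degree bound through $\deg(b_i\partial_i f)\le d^n$ (hence $\deg b_i\le d^n-d+1$) whereas the paper uses $\deg g_i\le d^n$ directly; both give the stated inequality $\deg\tilde\psi(\overline{X})\le d\cdot\deg\psi(\overline{X})+d^n$.
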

\begin{proof}
Since $B$ is generated as a $\C$-algebra by the $\overline{X}_i$,
it suffices to define $\tilde{\psi}$ on these elements.
Choose $Y_1,\ldots,Y_n\in A$ such that $\psi(\overline{X}_i)=\overline{Y}_i$
in $A/(f^{\nu})$ for all $1\le i\le n$. Our aim is to define
\begin{equation}\label{eq:defPsi}
\tilde{\psi}(\overline{X}_i):=\overline{Y_i+a_i f^\nu},\quad 1\le i\le n,
\end{equation}
with suitably chosen $a_i\in A$. Then it is clear that
$\pi\circ\tilde{\psi}=\psi$. It remains to show that one can define
$\tilde{\psi}$ unambigously by~\eqref{eq:defPsi}. This means that we have to
find~$a_i$ such that $f$ is mapped to zero in $A/(f^{\nu+1})$. Set
$Y:=(Y_1,\ldots,Y_n)$, $a:=(a_1,\ldots,a_n)$, and look at the condition
\begin{equation}\label{eq:cond}
f\mapsto \overline{f(Y+a f^\nu)}=0\quad \text{in}\quad A/(f^{\nu+1})
\end{equation}
By the Taylor formula we have
$$
f(Y+a f^\nu)\equiv f(Y)+\sum_{i=1}^n\partial_i f(Y)a_if^\nu\pmod{(f^{\nu+1})}.
$$
Since $\overline{f(Y)}=\psi(\overline{f})=0$ in $A/(f^{\nu})$, there exists
$p\in A$ such that $f(Y)=p f^\nu$ in $A$.
Furthermore, since $\overline{Y}_i=\overline{X}_i$ in $B$, condition~\eqref{eq:cond}
is satisfied if
\begin{equation}\label{eq:condLGS}
p+\sum_{i=1}^n\partial_i fa_i\equiv 0\pmod{(f)}.
\end{equation}

Since $\mZ(f)$ is smooth, we have $\mZ(f,\partial_1 f,\ldots,\partial_n f)=\emptyset$,
so by the effective Nullstellensatz~\cite{brow:87,koll:88,jel:05} there exist $g_1,\ldots,g_n\in A$ such that
\begin{equation}\label{eq:smoothnessHN}
\sum_{i=1}^n g_i\partial_i f\equiv 1\pmod{(f)}\quad \text{and}
\quad \deg g_i\le d^n.
\end{equation}
It follows that~\eqref{eq:condLGS} can be solved by choosing
$$
a_i:=-p g_i\quad \text{for all}\quad 1\le i\le n.
$$
Furthermore,
$$
\deg a_i\le\deg p +\deg g_i \le d\deg Y-\nu d+ d^n,
$$
which implies the claimed degree bound.
\end{proof}

\begin{corollary}\label{cor:psi}
There exists an embedding $\psi\colon B\hookrightarrow\hat{A}$ such that
$\psi(\overline{X}_i)=\sum_{\nu=0}^\infty a_{i\nu}f^\nu$, where $a_{i\nu}\in A$ with
$$
\deg a_{i\nu}\le d^n\sum_{i=0}^{\nu-1}d^i+d^\nu\le d^\nu(2d^{n-1}+1).
$$
\end{corollary}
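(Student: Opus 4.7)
The plan is to iterate Lemma~\ref{lem:groth} and then pass to the inverse limit. Starting from $\psi_1:=\id\colon B\to A/(f)=B$ with representatives $Y_i^{(1)}:=X_i$ (of degree~$1$), each application of the lemma furnishes a lift $\psi_{\nu+1}\colon B\to A/(f^{\nu+1})$ of $\psi_\nu$ together with an explicit formula $\psi_{\nu+1}(\overline{X}_i)=\overline{Y_i^{(\nu)}+a_i^{(\nu)}f^\nu}$ in the sense of~\eqref{eq:defPsi}. I would then choose the representatives for the next step to be $Y_i^{(\nu+1)}:=Y_i^{(\nu)}+a_i^{(\nu)}f^\nu$, so that the sequence $(Y_i^{(\nu)})_\nu$ telescopes into the partial sums of a power series in $f$. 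Setting $a_{i0}:=X_i$ and $a_{i\nu}:=a_i^{(\nu)}$ for $\nu\ge 1$ then produces
$$
\psi(\overline{X}_i):=\sum_{\nu=0}^\infty a_{i\nu}f^\nu\in\hat{A},
$$
and compatibility of the $\psi_\nu$ with the natural surjections $A/(f^{\nu+1})\twoheadrightarrow A/(f^\nu)$ makes $\psi$ a well-defined ring homomorphism into $\hat{A}$, realized as the inverse limit of the $A/(f^\nu)$. Injectivity is automatic, since $\psi$ composed with $\hat{A}\twoheadrightarrow A/(f)=B$ recovers $\psi_1=\id_B$.

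For the degree bound, I would extract from the proof of Lemma~\ref{lem:groth} the refined inequality $\deg a_i^{(\nu)}\le d\deg Y^{(\nu)}-\nu d+d^n$ (the sharper form that actually appears in that proof, not the cruder bound on $\deg\tilde{\psi}(\overline{X})$ stated in the lemma). Combined with the trivial recursion $\deg Y^{(\nu+1)}\le\max(\deg Y^{(\nu)},\deg a^{(\nu)}+\nu d)\le d\deg Y^{(\nu)}+d^n$ and the base case $\deg Y^{(1)}=1$, a short induction yields
$$
\deg Y^{(\nu)}\le d^{\nu-1}+d^n\sum_{k=0}^{\nu-2}d^k.
$$
Substituting this back into the estimate for $\deg a_i^{(\nu)}$ and collapsing the resulting geometric sum produces $\deg a_{i\nu}\le d^\nu+d^n\sum_{k=0}^{\nu-1}d^k$, matching the first claimed bound. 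The second inequality then follows from $\sum_{k=0}^{\nu-1}d^k\le 2d^{\nu-1}$, valid whenever $d\ge 2$.

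The only mildly delicate point is keeping the bookkeeping of degrees consistent through the telescoping lifts, so that the constants collapse to precisely the form stated in the corollary. Everything else is a direct iteration of Lemma~\ref{lem:groth} together with the defining universal property of $\hat{A}$.
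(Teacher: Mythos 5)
Your proof is correct and follows essentially the same route as the paper: iterate Lemma~\ref{lem:groth} starting from the identity, telescope the successive lifts into a power series in $f$, and induct on the degree bounds. If anything, your bookkeeping is slightly more careful than the paper's (which runs the recursion directly on $\deg a_{i\nu}$ rather than separating the bound on the representative $Y^{(\nu)}$ from the bound on the new coefficient), but the underlying argument is identical.
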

\begin{proof}
We start with $\psi_1:=\id_B$ and apply Lemma~\ref{lem:groth} successively to
construct the homomorphisms $\psi_\nu\colon B\to A/(f^\nu)$, $\nu\in\N$.
Together they define a homomorphism $\psi\colon B\to\hat{A}$, which is clearly
injective.
For $\nu=0$ we have $\psi_0(\overline{X})=\overline{X}$, whose degree is 1. By
Lemma~\ref{lem:groth} we have
\begin{align*}
\deg a_{i,\nu+1} & \le d \deg a_{i,\nu}+d^n\le d(d^n\sum_{i=0}^{\nu-1}d^i+d^\nu)+d^n\\
 & = d^n\sum_{i=0}^{\nu}d^i+d^{\nu+1}.\qed
\end{align*}
\end{proof}

For $a\in \hat{A}$ we write $\deg_\nu a\le \d_\nu$, if there exists a
representation $a=\sum_\nu a_\nu f^\nu$ with $\deg a_\nu\le \d_\nu$ for all
$\nu\in\N$. The degree bound from the previous corollary reads in this notation
$\deg_{\nu}(\psi(\overline{X}_i))\le d^\nu(2d^{n-1}+1)$.
\begin{corollary}\label{cor:directSum}
For all $a=\sum_\nu a_\nu f^\nu\in \hat{A}$ there exist unique $b\in B$ and
$c\in\hat{A}$ such that $a=\psi(b)+cf$ and
$$
\deg b\le\deg a_0,
$$
$$
\quad\deg_\nu c\le \max\{\deg a_{\nu+1},d^{\nu+1}(2d^{n-1}+1)\deg a_0\}.
$$
\end{corollary}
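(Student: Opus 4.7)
\emph{Proof plan.} The decomposition reduces to linear algebra over $\hat A$ once one views $\psi$ from Corollary~\ref{cor:psi} as a ring-theoretic section of the projection $\pi\colon\hat A\twoheadrightarrow\hat A/(f)=B$. Setting $b:=\overline{a}_0\in B$ gives $\deg b\le\deg a_0$ immediately. Since $\pi(a)=\overline{a}_0=\pi(\psi(b))$, the difference $a-\psi(b)$ lies in $(f)\hat A$, and because $f$ is a non-zero divisor in $\hat A$ (it is one in~$A$, and $\hat A$ is $A$-flat), there is a unique $c\in\hat A$ with $cf=a-\psi(b)$. Uniqueness of the pair $(b,c)$ follows because applying $\pi$ to the difference of two decompositions determines $b$ uniquely, after which cancelling~$f$ determines~$c$.

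For the degree bound on~$c$ we construct an explicit power-series representation of $\psi(b)$ in~$f$. The iteration in Corollary~\ref{cor:psi} begins with $\psi_1=\id_B$, so we may fix the representation
\[
\psi(\overline X_i)=X_i+\sum_{\nu\ge 1}a_{i\nu}f^\nu,\qquad \deg a_{i\nu}\le d^\nu(2d^{n-1}+1),
\]
whose constant-in-$f$ term is exactly $X_i$. Substituting into the polynomial $a_0$ and expanding yields $\psi(b)=\sum_\mu b_\mu f^\mu$ with $b_0=a_0$, so $cf=a-\psi(b)=\sum_{\nu\ge 1}(a_\nu-b_\nu)f^\nu$ and hence the representation $c=\sum_{\nu\ge 0}(a_{\nu+1}-b_{\nu+1})f^\nu$, giving $\deg_\nu c\le\max\{\deg a_{\nu+1},\deg b_{\nu+1}\}$.

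The only real remaining task is to bound $\deg b_{\nu+1}$ by $d^{\nu+1}(2d^{n-1}+1)\deg a_0$. Writing $D:=\deg a_0$, each coefficient $b_\mu$ (for $\mu\ge 1$) is a $\C$-linear combination of products $a_{i_1,\nu_1}\cdots a_{i_m,\nu_m}$ with $m\le D$ factors and $\nu_1+\cdots+\nu_m=\mu$, so by Corollary~\ref{cor:psi}
\[
\deg b_\mu\le(2d^{n-1}+1)\max_{\sum\nu_j=\mu}\sum_j d^{\nu_j}\le(2d^{n-1}+1)(d^\mu+D-1),
\]
the last step using that the maximum is achieved by concentrating all of $\mu$ in a single $\nu_j$. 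A short elementary estimate shows $d^\mu+D-1\le d^\mu D$ whenever $D\ge 1$, giving the claimed bound; the case $D=0$ is trivial since then $\psi(b)$ is a constant and $b_\mu=0$ for $\mu\ge 1$. I expect this combinatorial bookkeeping to be the main obstacle, since the asserted bound is essentially tight and the corner cases ($D=0$, $D=1$, $\mu=0$) need to be handled explicitly.
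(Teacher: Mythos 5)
Your proof is correct and follows essentially the same route as the paper: split $\hat A\simeq B\oplus(f)$ via $\psi$, identify $b=\pi(a_0)$, and bound $\deg_\nu$ of $\psi(b)=a_0(\psi(\overline X))$ by expanding the substitution monomial by monomial. The only difference is that you spell out the combinatorial estimate ($\sum_j d^{\nu_j}\le d^\mu+D-1\le d^\mu D$) that the paper dismisses as a ``straight-forward induction with respect to the degree.''
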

\begin{proof}
We have the exact sequence of $\hat{A}$-modules
$$
0\longrightarrow (f)\longrightarrow\hat{A}\stackrel{\pi}{\longrightarrow} B\longrightarrow 0,
$$
which splits by the homomorphism $\psi$. Hence, $\hat{A}\simeq B\oplus (f)$,
and the existence and uniqueness of the claimed representation follows. Note
that if $a=\psi(b)+cf$, then $b=\pi(a)=\pi(a_0)$. This implies the bound for
$b$.
Since $\psi$ and $\pi$ are homomorphisms and $a_0$ is a polynomial, we have
$$
\psi(b)=\psi(\pi(a_0))=a_0(\psi(\overline{X}))=a_0(\xi),
$$
where we denote $\xi_i:=\psi(\overline{X}_i)$ and $\xi=(\xi_1,\ldots,\xi_n)$. 
Hence, $cf=a-a_0(\xi)$ and $\deg_\nu c\le\max\{\deg_{\nu+1}a,\deg_{\nu+1} a_0(\xi)\}$.
Thus, it remains to bound the degree of $a_0(\xi)$. We can assume that $a_0$ is
a monomial, and for that a straight-forward induction with respect to the
degree using Corollary~\ref{cor:psi} shows 
$$
\deg_\nu a_0(\xi)\le d^{\nu}(2d^{n-1}+1)\deg a_0,
$$
which implies the claimed bound for the degrees.
\end{proof}

Now we define the homomorphism
$$
\hat{\psi}\colon B[[T]]\to\hat{A},\quad\sum_\nu b_\nu T^\nu\mapsto\sum_\nu \psi(b_\nu) f^\nu.
$$
\begin{lemma}\label{lem:iso}
The homomorphism $\hat{\psi}$ is an isomorphism, and 
we have
$$
\deg_\nu\hat{\psi}^{-1}(a) \le (2d^n+d)^\nu\deg a 
$$
for all $a\in A$.
\end{lemma}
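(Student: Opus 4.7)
The plan is to iterate Corollary~\ref{cor:directSum} so as to write every element of $\hat{A}$ uniquely as a convergent sum $\sum_\nu \psi(b_\nu)f^\nu$ with $b_\nu\in B$, and then to track the degrees of the $b_\nu$ by induction on the iteration count.

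First I would establish that $\hat{\psi}$ is a ring isomorphism. Starting from $a\in\hat{A}$, apply Corollary~\ref{cor:directSum} recursively to get $a=\psi(b_0)+c_1 f$, then $c_1=\psi(b_1)+c_2f$, and so on. After $k$ steps,
$$
a=\sum_{\nu=0}^{k-1}\psi(b_\nu)f^\nu + c_k f^k.
$$
Since $c_k f^k\in (f)^k\hat{A}$ tends to $0$ in the $(f)$-adic topology, the partial sums converge in $\hat{A}$ to $a=\hat{\psi}(\sum_\nu b_\nu T^\nu)$, proving surjectivity. Injectivity follows from the uniqueness part of Corollary~\ref{cor:directSum}: if $\sum_\nu \psi(b_\nu)f^\nu=0$, then the uniqueness of the decomposition $0=\psi(0)+0\cdot f$ forces $b_0=0$, hence $\sum_{\nu\ge 1}\psi(b_\nu)f^{\nu-1}=0$, and an induction on $\nu$ gives all $b_\nu=0$. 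That $\hat{\psi}$ is a ring homomorphism is immediate from its construction as the continuous extension of $\psi$ with $T\mapsto f$.

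For the degree estimate, set $K:=2d^{n-1}+1$ and $L:=dK=2d^n+d$, so that the constant appearing in Corollary~\ref{cor:directSum} is $d^{\nu+1}K=d^\nu L$. I would prove by induction on $k\ge 0$ the pair of bounds $\deg b_k\le L^k\deg a$ and $\deg_\nu c_{k+1}\le d^\nu L^{k+1}\deg a$, starting from $a\in A$ with the trivial representation $a_0=a$, $a_\nu=0$ for $\nu\ge 1$. The base case $k=0$ is direct from Corollary~\ref{cor:directSum}, yielding $\deg b_0\le\deg a$ and $\deg_\nu c_1\le d^{\nu+1}K\deg a=d^\nu L\deg a$. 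For the step, applying Corollary~\ref{cor:directSum} to $c_k$ and using the inductive bounds on $\deg_\nu c_k$ gives $\deg b_k\le \deg(c_k)_0\le L^k\deg a$ and
$$
\deg_\nu c_{k+1}\le\max\bigl\{\deg(c_k)_{\nu+1},\,d^{\nu+1}K\deg(c_k)_0\bigr\}\le d^{\nu+1}KL^k\deg a=d^\nu L^{k+1}\deg a,
$$
where $K\ge 1$ absorbs the first term of the max. The first bound is exactly the desired statement $\deg_\nu\hat{\psi}^{-1}(a)\le(2d^n+d)^\nu\deg a$.

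The main obstacle is ensuring that the two degree contributions at each iteration, the fresh factor $d^{\nu+1}K$ applied to $\deg(c_k)_0$ and the tail $\deg(c_k)_{\nu+1}$, combine cleanly so that the geometric ratio stays exactly $L$ per step; the identity $L=dK$ is precisely what makes this balancing work without introducing spurious constants.
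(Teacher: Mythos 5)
Your proposal is correct and follows essentially the same route as the paper: iterate Corollary~\ref{cor:directSum} to produce the expansion $a=\sum_\nu\psi(b_\nu)f^\nu$, then bound $\deg_\nu c_k$ and $\deg b_k$ by induction, which with $\gamma=2d^{n-1}+1$ gives exactly the paper's estimates $\deg_\nu c_\mu\le d^{\nu+\mu+1}\gamma^{\mu+1}\deg a$ and $\deg b_\mu\le(d\gamma)^\mu\deg a=(2d^n+d)^\mu\deg a$. The only cosmetic differences are your index shift on the remainders $c_k$ and deriving injectivity from the uniqueness in Corollary~\ref{cor:directSum} rather than directly from the injectivity of $\psi$; both are fine.
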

\begin{proof}
The injectivity of $\psi$ implies inductively that $\hat{\psi}$ is injective.
To show surjectivity,  let $a\in\hat{A}$. Construct $\sum_\nu b_\nu T^\nu\in B[[T]]$
with $\sum_\nu\psi(b_\nu)f^\nu=a$. We find the $b_\nu$ successively by applying
Corollary~\ref{cor:directSum}. Let $b_0\in B$ and $c_0\in \hat{A}$ with
$a=\psi(b_0)+c_0 f$. Then, there is  $b_1\in B$ and $c_1\in \hat{A}$ with
$c_0=\psi(b_1)+c_1 f$, and so forth. It follows
$a=\psi(b_0)+c_0 f=\psi(b_0)+\psi(b_1)f+c_1 f^2=\cdots=\sum_\nu \psi(b_\nu) f^\nu$,
which is the image of $\sum_\nu b_\nu T^\nu$. With $\g:=2d^{n-1}+1$ we first prove
$$
\deg_\nu c_\mu\le d^{\nu+\mu+1}\g^{\mu+1}\deg a,
\quad\nu,\mu\ge 0,
$$
by induction on $\mu$. For $\mu=0$ the claim follows directly
from Corollary~\ref{cor:directSum}. For $\mu\ge0$, we have by
induction hypothesis
\begin{align*}
\deg_\nu c_{\mu+1} \le & \max\big\{d^{\nu+\mu+2}\g^{\mu+1}\deg a,
 d^{\nu+1}\g d^{\mu+1}\g^{\mu+1}\deg a\big\}\\
\le & d^{\nu+\mu+2}\g^{\mu+2}\deg a,
\end{align*}
which proves the claim. Now, again by Corollary~\ref{cor:directSum}
$$
\deg b_\mu \le \deg_0 c_{\mu-1} \le (d\g)^\mu\deg a.\qed
$$
\end{proof}

\begin{proof}[of Theorem~\ref{thm:effGysin}]
Consider the short exact se\-quen\-ce of complexes
$$
0\to \Om^\bullet_A\to\Om^\bullet_{A_f}\to \Om^\bullet_{A_f}/\Om^\bullet_{A}\to 0.
$$
It induces a long exact sequence
\begin{align*}
\cdots & \to H^p(\Om^\bullet_A) \to H^p(\Om^\bullet_{A_f})\to
H^p(\Om^\bullet_{A_f}/\Om^\bullet_{A})\to \\
 & \to H^{p+1}(\Om^\bullet_A)\to \cdots.
\end{align*}
Let $p\ge 1$.
We have a map $\lambda\colon\Om_B^{p-1}\to \Om^p_{A_f}/\Om^p_{A}$ which sends
the class of $\om$ to $\frac{\ud f}{f}\wedge\om'$, where $\om'\in\Om_A$ is a
lift of $\om$.
The residue map $\Res\colon\Hdr^p(A_f)\to \Hdr^{p-1}(B)$ is a cohomology inverse
of~$\lambda$. It can be explicitly described as follows.

By Lemma~\ref{lem:iso} we have
\begin{equation}\label{eq:iso}
\Om^\bullet_{A_f}/\Om^\bullet_{A}\simeq \hat{\Om}^\bullet_{\hat{A}_f}/\hat{\Om}^\bullet_{\hat{A}}\simeq
\hat{\Om}^\bullet_{B[[T]][T^{-1}]}/\hat{\Om}^\bullet_{B[[T]]}.
\end{equation}
To construct the image $\Res(\om)$ of $\om\in\hat{\Om}^p_{B[[T]][T^{-1}]}$, expand
it in powers of $T$ and extract the coefficient of $\frac{\ud T}{T}$.


By linearity, it suffices to consider terms of the form
$$
\om=\frac{a}{f^s}\ud X_{i_1}\wedge\cdots\wedge\ud X_{i_p},\quad a\in A,\ 
i_1<\cdots<i_p,\ s\ge 1.
$$

To construct the image of $\om$ in $\hat{\Om}^\bullet_{B[[T]][T^{-1}]}/\hat{\Om}^\bullet_{B[[T]]}$
under the isomorphism~\eqref{eq:iso}, first note that by Lemma~\ref{lem:iso}
we have
\begin{align*}
b:= & \hat{\psi}^{-1}(a)=\sum_\nu b_{\nu}T^\nu\in B[[T]],\\
\Xi_i:= & \hat{\psi}^{-1}(X_i)=\sum_\nu b_{i\nu}T^\nu\in B[[T]],
\end{align*}
where
\begin{equation}\label{eq:boundCoeff}
\deg b_{\nu}\le (2d^{n}+d)^\nu\deg a,\quad \deg b_{i\nu}\le (2d^{n}+d)^\nu.
\end{equation}
Hence, $\om$ is mapped by the isomorphism~\eqref{eq:iso} to
$$
\hat{\om}=\frac{b}{T^s}\ud \Xi_{i_1}\wedge\cdots\wedge\ud \Xi_{i_p}.
$$
As noted in \S\ref{sec:basics}, we have
\begin{align*}
\ud \Xi_i & = \sum_{\nu\ge 0} (\nu b_{i\nu} T^{\nu-1}\ud T +\ud_B b_{i\nu}T^\nu)\\
 & = \sum_{\nu\ge 0} ( (\nu+1) b_{i,\nu+1} \ud T +\ud_B b_{i\nu})T^\nu.
\end{align*}
The terms of $\hat{\om}$ involving $\ud T$ are of the form
\begin{multline*}
\pm(\nu_1+1) b_\mu b_{i,\nu_1+1} T^{\mu+\nu_1+\cdots+\nu_p-s} \\
\cdot\ud T\wedge\ud_B b_{j_1,\nu_2}\wedge\cdots\wedge\ud_B b_{j_{p-1},\nu_{p}}
\end{multline*}
with some $1\le i,j_1,\ldots,j_{p-1}\le n$ and $\mu,\nu_1,\ldots,\nu_p\ge 0$.
To get the coefficient of $\ud T/T$, we have to consider the case
$\mu+\nu_1+\cdots+\nu_p=s-1$. Using that $\ud_B$ is of degree $0$ together with the 
estimate~\eqref{eq:boundCoeff}, it follows that this
coefficient is of degree
\begin{align*}
\le & \deg b_\mu+\deg b_{i,\nu_1+1}+\deg b_{j_1,\nu_2}+\cdots+\deg b_{j_{p-1},\nu_{p}} \\
\le & (2d^{n}+d)^s(\deg a+p)=(2d^{n}+d)^s(\deg \om+sd)
\end{align*}
which concludes the proof of the theorem.
\end{proof}

\begin{example}[cont.]
We keep the notation of Example~\ref{ex:univar}.
To confirm the proposed action of the residue map, consider
$\om=\frac{g}{f^s}\ud X\in\Om_{\C[X]_f}$. 
We claim that
$$
\Res(\om)=\sum_{i=1}^d\Res_{\z_i}\big(\frac{g}{f^s}\big)e_i,
$$
where $\Res_{\z_i}$ denotes the classical \textit{residue} at $\z_i$ of a
meromorphic function. Recall that $\Res_{\z_i}(h)$ is the coefficient of
$(X-\z_i)^{-1}$ in the Laurent expansion of $h$ around $\z_i$.

According to the proof of Theorem~\ref{thm:effGysin}, we have to check that
$$
\Res(\om)\cdot\frac{\ud f}{f}\equiv\om\pmod{\ud\,\C[X]_f}.
$$
This follows easily from the formulas
\begin{itemize}
\item $\ud f=\sum_i a_i e_i\ud X$,
\item $e_i e_j\equiv\d_{ij}e_i\pmod{(f)}$,
\item $\frac{a_i e_i}{f}=\frac{1}{X-\z_i}$,
\item $\om\equiv\sum_i \Res_{\z_i}(\frac{g}{f^s})\frac{1}{X-\z_i}\ud X\pmod{\ud\C[X]_f}$.
\end{itemize}
The last identity follows again from the partial fraction decomposition.
\end{example}

Now we are in the position to prove our main result.
\begin{theorem}\label{thm:effDeRham}
For each smooth hypersurface $X\subseteq \C^n$ of degree $d\ge 3$ we have
\begin{align*}
\deg(\Hdr^p(X)) & \le (p+1)(d+1)(2d^n+d)^{p+1} \\
 & \le d^{\Oh(pn)}.
\end{align*}
\end{theorem}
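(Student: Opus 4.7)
The plan is to combine the two tools already established in the paper: (i) the effective Gysin isomorphism of Theorem~\ref{thm:effGysin}, which converts the problem on $X$ into one on the complement $\C^n\setminus X$, and (ii) the known degree bound for the de Rham cohomology of a projective hypersurface complement recalled in Section~\ref{ss:knownCases}. The whole proof then amounts to controlling degrees through a dehomogenization and running it through the residue map.

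First I would let $f$ be a reduced equation of $X$ of degree $d$, and set $A=\C[X_1,\ldots,X_n]$, $B=A/(f)$, so that $\C[X]=B$ and $\C[\C^n\setminus X]=A_f$. The corollary to Lemma~\ref{lem:gysinSequ} gives an isomorphism $\Res\colon\Hdr^{p+1}(A_f)\stackrel{\sim}{\to}\Hdr^{p}(B)$ for $p\ge 0$, so it suffices to bound the degrees needed to represent cohomology on $\C^n\setminus X$ and then track those degrees through the effective Res of Theorem~\ref{thm:effGysin}.

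To bound cohomology on $\C^n\setminus X$, I would regard it projectively: if $\bar f\in\C[X_0,\ldots,X_n]$ is the homogenization of $f$ and $g=X_0\bar f$, then $g$ is homogeneous of degree $d+1$ and $\C^n\setminus X=\proj^n\setminus\mZ(g)$. The result quoted in Section~\ref{ss:knownCases} then asserts that every class in $\Hdr^{p+1}(\C^n\setminus X)$ is represented by a homogeneous form $\alpha/g^{p+1}$ with $\deg\alpha=(p+1)(d+1)$. Dehomogenizing via $X_0=1$ yields an affine form $\omega=\beta/f^{p+1}\in\Om^{p+1}_{A_f}$ whose coefficient polynomials have total degree at most $(p+1)(d+1)-(p+1)=(p+1)d$, hence $\deg\omega=\deg\beta-(p+1)d\le(p+1)(d+1)-(p+1)d=p+1$ in the paper's convention for localizations.

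Finally I would feed this $\omega$ into Theorem~\ref{thm:effGysin} with $s=p+1$: the residue is a $(p-1)$-form (when we started with $p+1$, we get $p$) of degree at most
\[
(2d^n+d)^{p+1}\bigl(\deg\omega+(p+1)d\bigr)\le(2d^n+d)^{p+1}(p+1)(d+1),
\]
which is exactly the claimed bound $\deg(\Hdr^p(X))\le(p+1)(d+1)(2d^n+d)^{p+1}=d^{\Oh(pn)}$. The main obstacle, and really the only nontrivial bookkeeping, is converting between the homogeneous degree in the projective setting and the affine degree used in Theorem~\ref{thm:effGysin}; once the dehomogenization is handled carefully it plugs directly into the effective residue estimate.
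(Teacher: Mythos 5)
Your proposal is correct and is essentially identical to the paper's own proof: the paper also passes to $\proj^n\setminus\mZ(X_0\bar f)$ (written there as the ``generous homogenization'' $\tilde f=X_0^{d+1}f(X/X_0)$ of degree $d+1$), invokes the quoted bound $\deg\tilde\alpha=(p+1)(d+1)$ for $\Hdr^{p+1}$ of the complement, dehomogenizes to get $\deg\omega\le p+1$, and applies Theorem~\ref{thm:effGysin} with $s=p+1$ together with the surjectivity of $\Res$. Your degree bookkeeping matches the paper's conventions exactly.
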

\begin{proof}
Let $X=\mZ(f)$, where $f$ is squarefree of degree $d$.
If we denote by $\tilde{f}$ the generous homogenization $X_0^{d+1}f(X/X_0)$,
then we have
$U:=\C^n\setminus \mZ(f)=\proj^n\setminus\mZ(\tilde{f})$.
As stated in \S\ref{ss:knownCases},
each cohomology class in $\Hdr^{p+1}(U)$ is represented by a differential form
$$
\frac{\tilde{\a}}{\tilde{f}^{p+1}},\quad\deg\tilde{\a}=(p+1)(d+1).
$$
Dehomogenizing yields a form $\om=\a/f^{p+1}$ with $\deg\a\le (p+1)(d+1)$, hence
$\deg\om\le p+1$. Since the residue map is surjective, the bound of
Theorem~\ref{thm:effGysin} implies the claim.
\end{proof}

\begin{example}[cont.]
We have seen that in the univariate case $\Hdr^0(\mZ(f))$ is generated by the
idempotents $e_i$, which are of degree $d-1$. Theorem~\ref{thm:effDeRham} gives
the bound $2d(d+1)$ in this case.
\end{example}

\begin{example}
Consider the hypersurface 
$$
V=\mZ(f)\subseteq\C^2,\quad\text{where}\quad f:=XY^2-X-1.
$$
One easily checks that
$V$ is smooth, but has a singularity at infinity, namely $(u:x:y)=(0:1:0)$.
There is one other point at infinity $(u:x:y)=(0:0:1)$, which is smooth.
Topologically, the projective closure $\overline{V}$ is a sphere with two
points collapsed (to the singularity), so $V$ is a sphere with three points
deleted. It follows that the cohomology is
$$
\Hdr^0(V)=\C\cdot 1,\quad \Hdr^2(V)=0,\quad\text{and}\quad \dim\Hdr^1(V)=2.
$$
Let us find generators of $\Hdr^1(V)$. Note that
\begin{equation}\label{eq:exampleIso}
V\to U:=\C\setminus\{\pm 1\},\quad (x,y)\mapsto y
\end{equation}
is an isomorphism, and $\Hdr^1(U)$ is generated by
$$
\frac{\ud Y}{Y-1},\quad \frac{\ud Y}{Y+1}.
$$
The isomorphism~\eqref{eq:exampleIso} identifies $X$ with $\frac{1}{Y^2-1}$ and
$\ud X$ with
$-\frac{2Y}{Y^2-1}\ud Y$. Hence, $\Hdr^1(V)$ is generated by
$$
X(Y+1)\ud Y,\quad X(Y-1)\ud Y.
$$
Theorem~\ref{thm:effDeRham} gives a bound of $3528$ for the degrees of
generators in this case, so there seems to be room to optimize.

Finally, let us determine the action of the residue map. Its inverse image maps
the generators of $\Hdr^1(V)$ to
$$
\frac{X(Y+1)^2(Y-1)}{f}\ud X\wedge\ud Y,\quad
\frac{X(Y+1)(Y-1)^2}{f}\ud X\wedge\ud Y,
$$
which are cohomologous to
$$
\frac{Y+1}{f}\ud X\wedge\ud Y,\quad
\frac{Y-1}{f}\ud X\wedge\ud Y,
$$
and generate $\Hdr^2(\C^2\setminus V)$.

To find the action for general inputs, note that
$$
X\partial_X f-f=1.
$$
Hence, any 2-form
$$
\frac{h}{f}\ud X\wedge\ud Y=\frac{h}{f}X\partial_X f\ud X\wedge\ud Y
-h\ud X\wedge\ud Y
$$
is equivalent to
$$
\frac{h}{f}X\partial_X f\ud X\wedge\ud Y=
Xh\frac{\ud f}{f}\wedge\ud Y
$$
modulo exact forms, and hence is mapped by the residue map to
$$
Xh\ud Y.
$$
We remark that, also in the general case, one can determine by this method the
image under the residue map for forms of order 1 along $f$ without using the
completion, but this method does not work for higher orders. This also explains the
overestimate of our bound in this example.
\end{example}

\section*{Acknowledgements}
The author thanks Sergei Yakovenko for asking the question addressed in this
paper and bringing to his attention the solution of the infinitesimal Hilbert
16th problem~\cite{bny:10}.
He is also grateful to the Hausdorff Center for Mathematics, Bonn, for its
kind support. 


\end{document}